\newtheorem{lemma}{Lemma}
\newtheorem{theorem}{Theorem}
\newtheorem{proposition}{Proposition}
\newtheorem{corollary}{Corollary}
\newcommand{\XBT}{(X, \mathcal{B}, \mu, T)}
\newcommand{\XBG}{(X, \mathcal{B}, \mu, T_g)}
\newcommand{\YCG}{(Y, \mathcal{C}, \nu, S_g)}
\newcommand{\eps}{\epsilon}
\newcommand{\emp}{\emptyset}
\newcommand{\bac}{\setminus}
\begin{document}

\bibliographystyle{plain}

\title{On the Jewett-Krieger theorem for amenable groups }
\author{ Benjamin Weiss}

\address {Institute of Mathematics\\
 Hebrew University of Jerusalem\\
Jerusalem\\
 Israel}
\email{benjamin.weiss1@mail.huji.ac.il}

\setcounter{page}{1}
\date{}

\maketitle
\begin{abstract}
   Up to now there has been no proof in the literature of the often quoted fact that the Jewett-Krieger theorem is valid for all
   countable amenable groups. In this brief note I will close this gap by applying a recent result of B. Frej and D. Huczek
   \cite{FH}.
\end{abstract}

\vspace{1cm}

  In his paper \cite{R} Alain Rosenthal showed that any free ergodic action of a countable amenable group $G$ with finite entropy equal to $h$
  has a \textbf{uniform generating partition} with $k$ elements if $h < \log(k-2)$. This is the same as saying  that it can be represented as a strictly ergodic subshift of $\{1, 2, \cdots k\}^G$.
  His proof was based on his preprint
 ``Strictly ergodic  models and amenable group actions". In this preprint
  Alain, generalizing the Jewett-Krieger theorem for $G=\mathbb{Z}$,  proved that every free ergodic action of a countable amenable group has a strictly ergodic model.
   Unfortunately this preprint has remained unpublished.
  This has left a gap in the literature.

  It turns out that a recent (2018) paper of Bartosz Frej and Dawid Huczek \cite{FH} essentially gives a proof of this result. Their main theorem
  is the following:
  \begin{theorem}
  Let $X$ be a 0-dimensional compact space  with a free topological action of an amenable group $G$ by homeomorphisms and
let $K$ be any  face of the simplex $M_G(X)$ of G-invariant measures of X. Then there exists
another free action of $G$ on a 0-dimensional compact space, $Y$ and an affine homeomorphism $\theta$ of $K$ onto the full simplex $M_G(Y)$,
such that for all $\mu \in K$ the systems $(X, \mu, G)$ and $(Y, \theta(\mu), G)$ are isomorphic.
  \end{theorem}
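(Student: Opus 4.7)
The plan is to realize $Y$ as a symbolic extension of $X$ built from a carefully chosen sequence of refining clopen partitions, and then to identify $K$ with $M_G(Y)$ by a direct frequency argument. Since $X$ is zero-dimensional and metrizable, I first fix a refining sequence $\mathcal{P}_1 \prec \mathcal{P}_2 \prec \cdots$ of finite clopen partitions with diameters tending to $0$, so that together they generate the Borel $\sigma$-algebra $\mathcal{B}$. I also fix a Følner sequence $(F_n)$ in $G$, and for each $n$ I use the Ornstein--Weiss (or the Frej--Huczek) quasi-tiling lemma to produce a finite collection of Følner shapes that $\eps_n$-quasi-tile any sufficiently invariant set, with $\eps_n \to 0$ rapidly.

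The construction is then inductive. At stage $n$, for each tile shape I declare a finite list of \emph{admissible} $\mathcal{P}_n$-patterns: those whose empirical $\mathcal{P}_n$-distribution is within $\eps_n$ of the $\mathcal{P}_n$-distribution of some measure in $K$. I then modify the $\mathcal{P}_n$-name that $X$ induces by replacing the pattern on each tile by an admissible one, subject to two constraints: the modification is a continuous function of the original name, and for every $\mu \in K$ only an $\eps_n$-fraction of the base of each tower is disturbed. Iterating and passing to an inverse limit yields a zero-dimensional compact $Y$ together with a Borel coding map $\pi : X \rta Y$ that is a $G$-equivariant isomorphism off a null set for every $\mu \in K$. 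I then define $\theta(\mu) = \pi_* \mu$.

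It remains to verify that $\theta$ sends $K$ affinely and homeomorphically onto all of $M_G(Y)$ and that the resulting measure-theoretic systems are isomorphic. Affinity, injectivity, and continuity of $\theta$ are routine once $\pi$ is in hand; the measurable isomorphism follows because the refining clopen partitions generate $\mathcal{B}$ modulo null sets. The hard part is surjectivity of $\theta$: one must rule out extraneous invariant measures on $Y$. Any $\nu \in M_G(Y)$ can, via the mean ergodic theorem along $F_n$, be approximated by empirical averages of $\mathcal{P}_n$-counts on long Følner sets; the quasi-tiling forces these counts to be controlled by the admissible patterns, whose empirical distributions lie within $\eps_n$ of $K$. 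Since $K$ is closed this pins $\nu$ inside $\theta(K)$. Freeness on $Y$ is guaranteed by arranging the coding to separate orbit positions at each stage. The real obstacle is the simultaneous control required in stage $n$: one needs a tiling lemma that operates uniformly across \emph{all} measures in the face $K$, rather than one measure at a time, and this uniform statement is precisely the technical improvement carried out in \cite{FH}.
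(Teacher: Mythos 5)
This theorem is not proved in the note at all: it is quoted verbatim as the main result of Frej and Huczek \cite{FH}, and the note only applies it. So the comparison here is between your sketch and the actual argument of \cite{FH}. Your outline does point in the right general direction --- a Jewett--Krieger-style symbolic construction over quasi-tilings, with admissible patterns whose empirical distributions track $K$, and a surjectivity argument via empirical measures along F\o lner sets --- but as written it has gaps that are not merely technical.

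First, the step ``the modification is a continuous function of the original name'' is the entire difficulty, and you postulate it rather than achieve it. To carry it out one needs the quasi-tilings themselves to be realized as a topological (not just measurable) factor of $X$, i.e.\ a continuous, equivariant, dynamically defined system of tilings; producing such tilings for a general free action of an amenable group is a substantial piece of machinery (this is where the freeness of the topological action enters, and it is a central technical contribution of \cite{FH} and its precursors), not something the Ornstein--Weiss lemma hands you. Without it, your $\pi$ is only Borel, $Y$ need not be compact or carry the intended structure, and the inverse limit does not close up. Second, your argument never uses the hypothesis that $K$ is a \emph{face} of $M_G(X)$, yet the statement is false for a general closed convex subset: $M_G(Y)$ is always a Choquet simplex, so it cannot be affinely homeomorphic to a non-simplex, and even for convex non-faces your surjectivity argument would overshoot. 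The face property (in particular, closure of $K$ under passing to ergodic components) is exactly what is needed to upgrade ``every $\nu\in M_G(Y)$ has marginals $\eps_n$-close to those of members of $K$'' to ``$\nu\in\theta(K)$''; controlling finite-dimensional marginals alone does not identify $\nu$ with $\theta(\mu)$ for a specific $\mu\in K$, nor does it show that $\theta$ is onto rather than merely into a set with the right marginal closure. Finally, freeness of \emph{all} orbits of $Y$ ``by separating orbit positions at each stage'' needs an actual mechanism; in \cite{FH} this again comes from the topological freeness of the action on $X$ and the structure of the tilings.
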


 As a  special case of their result,
  where the \textbf{face} of the simplex of invariant measures is simply a single ergodic measure, we have the  following.
  \begin{corollary}If $G$ is an amenable group, and the free ergodic action $\XBT$
  has an isomorphic representation $(Y, \nu, S_g)$,
  where $Y$ is a 0-dimensional space, the $S_g$'s are homeomorphisms, and \textbf{all} orbits are free,
    then there is a uniquely ergodic model for $\XBG$. This can be made strictly ergodic by simply restricting to the closed support of the measure.
   \end{corollary}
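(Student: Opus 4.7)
The plan is to apply Theorem 1 directly to the zero-dimensional model $(Y,\nu,S_g)$, taking the face $K$ of the simplex $M_G(Y)$ of invariant measures to be the singleton $\{\nu\}$. Since $\nu$ is ergodic, it is an extreme point of $M_G(Y)$, and an extreme point is a zero-dimensional face of a Choquet simplex; so this choice of $K$ is a legitimate input to the theorem.

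Feeding this $K$ into Theorem 1, one obtains a zero-dimensional compact space $Y'$ carrying a free action of $G$ by homeomorphisms, together with an affine homeomorphism $\theta$ from $\{\nu\}$ onto the full simplex $M_G(Y')$. The only way this can happen is for $M_G(Y')=\{\theta(\nu)\}$ to consist of a single point, which says precisely that the action on $Y'$ is uniquely ergodic with invariant measure $\theta(\nu)$. The isomorphism clause of Theorem 1 then delivers that the measure-preserving systems $(Y,\nu,G)$ and $(Y',\theta(\nu),G)$ are isomorphic; composing with the hypothesized isomorphism between $(X,\mathcal{B},\mu,T_g)$ and $(Y,\nu,S_g)$ shows that $(Y',\theta(\nu),G)$ is a uniquely ergodic topological model of the original system, which is the first conclusion.

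To upgrade unique ergodicity to strict ergodicity, I would restrict to the closed $G$-invariant set $Z=\mathrm{supp}(\theta(\nu))\subseteq Y'$. Unique ergodicity survives the restriction, since any $G$-invariant probability measure on $Z$ extends trivially to one on $Y'$ and therefore coincides with $\theta(\nu)$, which is already supported on $Z$. On $Z$ this unique invariant measure has full support, so any nonempty closed invariant subset of $Z$ would carry a $G$-invariant probability (by amenability), which by unique ergodicity must be $\theta(\nu)\!\restriction_Z$; having full support, this forces the subset to be all of $Z$. Hence $Z$ is minimal, and the action on $Z$ is strictly ergodic.

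The only step that calls for any care is the verification that a single ergodic measure qualifies as a face in the sense of \cite{FH}: this is the standard convex-geometric fact that the extreme points of a Choquet simplex are exactly its zero-dimensional faces, but one should confirm that the definition of face used in \cite{FH} does not implicitly require positive dimension or some other non-triviality. Once this is clear, the corollary is an essentially immediate specialization of the main theorem to the zero-dimensional face $\{\nu\}$, followed by the routine restriction-to-the-support argument.
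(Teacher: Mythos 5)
Your proposal is correct and follows exactly the route the paper intends: the paper derives the corollary precisely by taking the face $K$ of $M_G(Y)$ to be the singleton consisting of the ergodic measure $\nu$, so that the affine homeomorphism onto $M_G(Y')$ forces unique ergodicity, and then restricts to the closed support of the image measure to get strict ergodicity. Your additional care about singletons being legitimate faces and the amenability argument for minimality on the support simply fills in details the paper leaves implicit.
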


   All that remains to obtain a proof of the Jewett-Krieger theorem for amenable groups is to show that there always exists
  such a free topological model. This follows immediately from a general result of G\'{a}bor Elek \cite{E} which gives much more than is needed here. Thus it is
    perhaps worthwhile to give a direct proof of this fact.

     \begin{proposition}\label{free} Every free measure preserving action $\XBG$ of a countable group $G$ has a topologically free model on a 0-dimensional compact space.
  \end{proposition}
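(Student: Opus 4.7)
My plan is to produce the model in two stages. In the first stage I take a countable $G$-invariant subalgebra $\mathcal{A}\subset\B$ that generates $\B$ modulo $\mu$-null sets, and let $(Y_0,S^{(0)}_g)$ be its Stone space with the induced continuous $G$-action. The pushforward of $\mu$ along the natural map $\iota\colon X\to Y_0$ is an invariant Borel probability measure $\nu_0$, and $(Y_0,\nu_0,S^{(0)}_g)$ is a 0-dimensional measure-theoretic model of $\XBG$. Since the action on $\XBG$ is free, the non-free locus $F=\bigcup_{g\ne e}\{y\in Y_0:S^{(0)}_gy=y\}$ is an $F_\sigma$ of $\nu_0$-measure zero, but in general it need not be empty---which is why a second stage is required.

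In the second stage I enlarge $Y_0$ by joining it with an auxiliary topologically free 0-dimensional $G$-space. Fix a 0-dimensional compact metric $G$-space $Z$ on which the action is topologically free; for any countable $G$ such a $Z$ exists (for example, a free subshift of $\{0,1\}^G$). Build a Borel $G$-equivariant map $\psi\colon X\to Z$. The combined map $\Phi=(\iota,\psi)\colon X\to Y_0\times Z$ is then Borel and $G$-equivariant, and essentially injective because $\iota$ alone is. Taking $Y=\overline{\Phi(X)}\subset Y_0\times Z$ with the restricted $G$-action and the pushforward measure $\Phi_*\mu$ gives a 0-dimensional compact $G$-model. The $Y_0$-projection is a measure isomorphism, so $(Y,\Phi_*\mu,G)\cong\XBG$; and topological freeness of $Y$ is inherited from $Z$ via the $Z$-projection, since every $y\in Y$ has its $Z$-coordinate in a compact topologically free $G$-set.

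The step I expect to be the main obstacle is the construction of $\psi$: one must produce a Borel $G$-equivariant map whose image lies in a \emph{compact} topologically free $G$-space, not merely in the (typically non-closed) set of free points of an ambient shift space, since otherwise the closure of the image could acquire new, non-free boundary points. In the amenable setting in which the paper lives, I would carry this out by a marker / quasi-tiling construction in the style of Ornstein--Weiss: inductively choose $G$-invariant partitions of $X$ whose joint symbolic coding of each orbit is aperiodic in a manner preserved under passage to limits in the shift, so that the induced equivariant coding takes values in a free subshift. This coding is the desired $\psi$, and the rest of the argument then goes through as above.
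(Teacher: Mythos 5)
Your overall architecture coincides with the paper's: first pass to a 0-dimensional compact topological model (the paper embeds $X=C$ into $C^G$ via $x\mapsto (T_gx)_{g\in G}$, which plays the role of your Stone space $Y_0$), then take a product with an auxiliary compact topologically free $G$-space, equipped with the graph measure. However, the difficulty you correctly isolate --- producing a Borel equivariant map $\psi$ whose essential image sits inside a \emph{compact} topologically free $G$-space --- is exactly the content of the proposition, and your treatment of it has a genuine gap. First, the proposition is stated, and proved in the paper, for an arbitrary countable group $G$; the paper explicitly stresses that amenability is not used here. Your construction of $\psi$ via Ornstein--Weiss quasi-tilings is only available for amenable $G$, so at best you obtain a special case (enough for the Jewett--Krieger application, but not the statement as given). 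Second, even for amenable $G$, the assertion that an inductively chosen symbolic coding is ``aperiodic in a manner preserved under passage to limits in the shift'' is precisely what needs proof: a coding can be aperiodic on a full-measure set of orbits while its closure in the shift acquires fixed points of some $\sigma_g$. Relatedly, the existence of a nonempty free subshift of $\{0,1\}^G$ for a general countable $G$ is itself a nontrivial theorem (Gao--Jackson--Seward), and arranging an equivariant coding to land inside a prescribed such subshift is harder still; none of this is carried out.

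The paper closes exactly this gap with an elementary lemma: for each $g\neq e$ there is an open set $A$ with $A\cap T_gA=\emptyset$ and $\mu(T_{g^{-1}}A\cup A\cup T_gA)=1$, built as an increasing union of clopen sets by exhausting the complement of ever smaller neighborhoods of the fixed-point set $E_g$. Coding each orbit by the indicators $f_i(x)[h]=\mathbb{I}_{A_i}(T_hx)$ for an enumeration $\{g_i\}$ of $G\setminus\{e\}$, these two properties become conditions on finitely many coordinates that hold almost everywhere, hence hold at every point of the topological support $\Omega_0$ of the image measure; together they force $\sigma_{g_i}\omega\neq\omega$ for every $\omega\in\Omega_0$ and every $i$. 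This is the concrete mechanism your sketch is missing: if you supply an argument of this kind, valid for every $g\neq e$ and stable under taking limit points of the coding, your proof goes through.
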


    The combination of the corollary and Proposition \ref{free} prove the following theorem which is a slight strengthening of the Jewett-Krieger
    theorem for amenable groups:

    \begin{theorem}  Every free ergodic measure preserving action $\XBG$ of a countable amenable group $G$ has a strictly ergodic model $(Y, G)$ where
    the action is topologically free, i.e. all orbits are free.
    \end{theorem}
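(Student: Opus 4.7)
The plan is to chain together the two building blocks already in place: Proposition \ref{free}, which produces a topologically free 0-dimensional model, and the Corollary, which upgrades any such model to a uniquely ergodic one without destroying the freeness. The remaining work is routine bookkeeping: checking that ergodicity transfers across measurable isomorphism, and that topological freeness survives passage to the closed support of the measure.

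First I would apply Proposition \ref{free} to the given free ergodic system $\XBT$, obtaining a 0-dimensional compact space $Z$, a homeomorphism action of $G$ on $Z$ in which every orbit is free, and a $G$-invariant Borel probability measure $\mu'$ on $Z$ such that the measure-preserving system $(Z, \mu', G)$ is isomorphic to $\XBG$. Since ergodicity is a measure-theoretic invariant, $\mu'$ is ergodic for the action on $Z$.

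Next I would feed $(Z,\mu',G)$ into the Corollary. A single ergodic measure is an extreme point, hence a face, of the simplex $M_G(Z)$, and the remaining hypotheses of the Corollary (0-dimensional phase space, homeomorphism action, and \emph{every} orbit free) are exactly what Proposition \ref{free} has supplied. This yields a 0-dimensional compact space $Y$ carrying a topologically free $G$-action and a unique invariant probability measure $\nu = \theta(\mu')$ such that $(Y, \nu, G)$ is measure-theoretically isomorphic to $(Z, \mu', G)$, and hence to $\XBG$.

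Finally, to obtain \emph{strict} ergodicity I would pass to the closed, $G$-invariant subset $Y_0 := \mathrm{supp}(\nu)$, following the last sentence of the Corollary. The unique $G$-invariant measure on $Y_0$ is still $\nu$, now with full support, so $(Y_0, G)$ is strictly ergodic; and since topological freeness of the $G$-action descends to any $G$-invariant subset, every orbit of $Y_0$ is free, giving precisely the strengthening asserted. I do not anticipate any substantive obstacle in this assembly: all the difficulty is concentrated in the two inputs, so the only thing that could conceivably go wrong is a mismatch between the hypotheses of the Corollary and the output of Proposition \ref{free}, which a direct comparison of the two statements rules out.
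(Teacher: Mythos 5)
Your proposal is correct and follows exactly the route the paper intends: the theorem is stated there as the immediate combination of Proposition \ref{free} (to get a topologically free 0-dimensional model) with the Corollary of the Frej--Huczek theorem (to get unique ergodicity), followed by restriction to the support of the measure. Your added checks — that ergodicity transfers under measure isomorphism and that topological freeness passes to the closed invariant subset $\mathrm{supp}(\nu)$ — are exactly the routine verifications the paper leaves implicit.
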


    We turn to the proof of the proposition in which the group $G$ need not be amenable and only the freeness of the measure preserving action is assumed.
    As a first step we will see that we may assume that the space $X$ is compact, 0-dimensional and that the action is by homeomorphisms.
    Indeed since by the fact that all standard measure spaces are isomorphic, we can assume that the space $X$ in our measure preserving system is the Cantor set $C$.
    Set $Y = C^G$ with the shift action and define $\phi: X \rightarrow Y$ by the formula $\phi(x)(g) = T_g(x)$. This mapping is one to one and equivariant and therefore the image of $\mu$
     under the mapping $\phi$ and the measure $\phi \circ \mu$ gives us a system isomorphic $\XBG$. We will retain the notation $\XBG$ for this new system. We will now construct a factor of $\XBG$ which is
     topologically free and then the factor joining (which we will define below) will give us the desired result. For this the main tool will
     be the following lemma.

      \begin{lemma}\label{basic} Let $(X, T_g)$ be a continuous action of a countable group $G$ on a 0-dimensional compact metric space $(X, d)$, and let  $\nu$ be a $G$-invariant
   measure such that for all $g \neq e$ the set $E_g = \{ x \in X | T_gx = x \}$ satisfies  $\mu(E_g) = 0$. Then, for each $g \neq e$,  there is an open set $A$ that satisfies: \\

     (i) $A \cap T_gA = \emptyset.$ \\

     (ii) $\mu(T_{g^{-1}}A \cup A \cup T_gA) = 1.$ \\

    (iii) The boundary of $A$ is contained in $E_g.$
   \end{lemma}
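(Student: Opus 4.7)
The plan is to construct $A$ as a countable union of carefully chosen clopen subsets of $X$, arranged so that the three open sets $A$, $T_g A$, and $T_{g^{-1}} A$ are automatically pairwise compatible. Fix a countable Boolean algebra $\B$ of clopen subsets of $X$ that forms a basis for its topology, and set
\[
\B_g \;=\; \{B \in \B : B \cap T_g B = \emptyset\}.
\]
A preliminary check is that $\B_g$ is a basis for the subspace topology on $X \setminus E_g$: for $x \notin E_g$ one has $T_g x \neq x$, so disjoint clopen neighborhoods of $x$ and $T_g x$ exist, and intersecting with a member of $\B$ containing $x$ produces arbitrarily small members of $\B_g$ containing $x$.

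Enumerate $\B_g = \{B_1, B_2, \ldots\}$ and define recursively
\[
V_i \;=\; B_i \setminus \bigcup_{j < i}(V_j \cup T_g V_j \cup T_{g^{-1}} V_j),
\]
which is clopen in $X$ and contained in $B_i \subseteq X \setminus E_g$. Set $A = \bigcup_{i \geq 1} V_i$, an open subset of $X$. A short induction shows that for any indices $k \neq j$ one has $V_k \cap T_g V_j = \emptyset$ (the larger of the two indices explicitly excludes the translate of the smaller), while $V_j \cap T_g V_j \subseteq B_j \cap T_g B_j = \emptyset$. This gives condition (i) and simultaneously the stronger assertion that both $T_g V_j$ and $T_{g^{-1}} V_j$ are disjoint from $A$ for every $j$. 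For (ii), each $x \in X \setminus E_g$ lies in $B_i$ for some least index $i$; by construction either $x \in V_i \subseteq A$, or $x$ already belongs to some $V_j \cup T_g V_j \cup T_{g^{-1}} V_j$ with $j < i$, so in all cases $x \in A \cup T_g A \cup T_{g^{-1}} A$. Since $\mu(E_g) = 0$, this union has full measure.

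For (iii), the same dichotomy handles the boundary: every $x \notin E_g$ lies in one of the clopen sets $V_j$, $T_g V_j$, or $T_{g^{-1}} V_j$, the first of which is contained in $A$ while the other two are disjoint from $A$ by the preceding paragraph. Hence $x$ is an interior point of $A$ or of $X \setminus A$, so $x \notin \partial A$, and $\partial A \subseteq E_g$ follows. The only step I expect to require genuine care is the preliminary verification that $\B_g$ is a basis for $X \setminus E_g$; once this is in hand, the inductive bookkeeping in the definition of $V_i$ delivers all three conditions in one stroke, with no measure-theoretic input beyond $\mu(E_g) = 0$.
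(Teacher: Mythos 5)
Your proof is correct. The heart of it --- greedily adjoining clopen pieces after deleting the $T_{g^{-1}}$-, identity-, and $T_g$-translates of everything already chosen, so that $A$, $T_gA$, $T_{g^{-1}}A$ stay pairwise disjoint and every point swept up is interior either to $A$ or to its complement --- is exactly the mechanism of the paper's proof. Where you differ is in the global organization, and your version is arguably cleaner. The paper runs the exhaustion in stages: it fixes $\eps_n$ with $\mu(E_{\eps_n})<10^{-n}$, uses compactness of $Z_n = X\bac E_{\eps_n}$ (minus what is already covered) to extract a \emph{finite} cover by clopen sets $U$ with $U\cap T_gU=\emp$, and disjointifies within each stage; condition (ii) then comes from the quantitative estimate $\mu(\bigcup_{i\le n}C_i)\ge 1-10^{-n}$. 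You instead run a single countable induction over an enumeration of all basic clopen sets $B$ with $B\cap T_gB=\emp$, which (as you note) form a basis for $X\bac E_g$; each $V_i$ is clopen because only finitely many clopen sets are subtracted at each step, and you obtain the strictly stronger, purely topological conclusion $X\bac E_g\subseteq T_{g^{-1}}A\cup A\cup T_gA$, from which (ii) drops out using only $\mu(E_g)=0$, with no $\eps$-neighborhoods, finite subcovers, or measure estimates. The only point needing the care you flagged is the basis claim for $\B_g$, and your argument there is fine: given $x\notin E_g$, choose disjoint clopen $U\ni x$ and $W\ni T_gx$ and take any basic $B$ with $x\in B\subseteq U\cap T_{g^{-1}}W$, so that $B\cap T_gB\subseteq U\cap W=\emp$.
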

   \begin{proof}In the following we fix a $g \neq e$ and will denote $E_g$ simply by $E$. We will also use the notation $E_{\eps}$ for
   the set $\{ x \in X | d(x, E) < \eps \}$.
   The set $A$ will be constructed as the  union of a sequence of clopen sets $A_i$.

   For the first step choose $\eps_1$ so that $\mu(E_{\eps_1}) < \frac{1}{10}$ and set $Z_1 = X \bac E_{\eps_1}$. Since $T_g$ is continuous, for each
   $z \in Z_1$ there is a clopen neighborhood of $z$, $U_z$,  such that $U_z \cap T_gU_z = \emp$. Since $Z_1$ is closed there is a finite collection of
   clopen sets $\{U_i | 1 \leq i \leq k\}$, that for each $i$ satisfy \\
   $U_i \cap T_gU_i = \emp$
  and $\bigcup_{i=1}^k U_i = Z_1$. Set $B_1 = U_1$ and then define

   $$ B_2 = B_1 \cup   (U_2 \bac (T_{g^{-1}}B_1 \cup B_1 \cup T_gB_1).$$
    It follows that $B_2 \cup T_gB_2 = \emp$.
   Continue in this fashion, namely for $i < k$ set define

   $$B_{i+1} = B_i \cup   (U_{i+1} \bac (T_{g^{-1}}B_i \cup B_i \cup T_gB_i).$$
     and set $A_1 = B_k$. This is a clopen set such that $C_1=T_{g^{-1}}A_1 \cup A_1 \cup T_gA_1$ covers $Z_1$ and satisfies the condition $A_1 \cap T_gA_1 = \emp$. Observe that $C_1$ is clopen and
    disjoint from $E$ and we can therefore find an $\eps_2$ such that it is also disjoint from $E_{\eps_2}$ and in addition $\mu(E_{\eps_2}) < \frac{1}{100}$.
     Define
      $Z_2 = X \bac (E_{\eps_2} \cup C_1)$
   and observe that it is closed and disjoint from $E$.

  We will now repeat the construction that was done for Z$_1$.  Notice that for $z \in Z_2$, since $C_1$ is clopen, the clopen neighborhood $U_z$ of $z$
     for which $U_z \cap T_gU_z = \emp$
     can be taken small enough so that $T_{g^{-1}}U \cup U \cup T_gU$ is disjoint from $C_1$. As before finitely many such neighborhoods will cover $Z_2$
    and with them we construct $A_2$ so that $C_2 = T_{g^{-1}}A_2 \cup A_2 \cup T_gA_2$ covers $Z_2$ and $(A_1 \cup A_2) \cap T_g (A_1 \cup A_2)=\emp$.

    The general step should be clear, and the construction yields a sequence  $A_i$ so that their union $A = \bigcup_1A_i$ satisfies the conclusions of the lemma.
    $\Box$ \\
 \end{proof}

       We return now to the proof of the proposition.
       \begin{proof}
      Fix an enumeration of the elements of the group $G$,\\
       $\{e, g_1, g_2, \cdots g_n, \cdots \}$, and for each $g_i$ let $A_i$ be a subset satisfying the conclusion
  of Lemma \ref{basic} with $g=g_i$. For each $i$ define
  $f_i:X \rightarrow \{0,1 \}^G$ by $f_i(x)[h] = \mathbb{I}_{A_i}( T_hx)$. This mapping is equivariant
  with respect to the shift action of $G$ on  $\{0,1 \}^G$.  Now set
  $\Omega = \{ 0, 1 \}^{\mathbb{N} \times G}$ and define $F: X \rightarrow \Omega$ by $F(x)(i, h) = f_i(x)[h]$. This mapping is equivariant with respect to the diagonal action of the
  shift on the countable product of $\{0,1 \}^G$. If $\Omega_0$ is the topological support of $F \circ \mu$, it is clear that the action of $G$ on $\Omega_0$
   is topologically free.

   Consider the product $X \times \Omega_0$, with the diagonal action
of $G$. Obviously, this system is topologically free, because it has the free factor
$\Omega_0$. On the other hand, the mapping $\hat F:X \to X \times \Omega_0$
given by $\hat F(x) = (x, F(x))$ is a measure isomorphisms between $\mu$ on $X$
and $\hat F\circ \mu$ on $X \times \Omega_0$. So, $X \times \Omega_0$ is the
desired topologically free model of $(X,\mathcal B,\mu,T_g)$.

  $\Box$
       \end{proof}

   In conclusion I would like to point out that the preprint of A. Rosenthal mentioned above also contains a proof of a relative version of the Jewett-Krieger theorem for amenable groups.
   This concerns the situation where we  have a free ergodic system $\XBG$ and a free factor system $\YCG$ given by a mapping $\pi:X \rightarrow Y$ such that $\pi^{-1}(\mathcal{C}) = \mathcal{B}$, and
   $\pi \circ \mu = \nu$ and $\pi T_g = S_g \pi$ for all $g \in G$. We are given in addition a strictly ergodic model $(\hat{Y}, G)$ of $\YCG$, and the relative Jewett-Krieger theorem asserts that there is a topological
   extension $(\hat{X}, G)$ of $(\hat{Y}, G)$ given by a continuous equivariant map $\hat{\pi}: \hat{X} \rightarrow \hat{Y}$ such that $(\hat{X}, G)$
   is a strictly ergodic model of $\XBG$. For a more detailed discussion of this kind of generalization see \cite{W}.
    Readers who are interested in the preprint should contact me.

    I would like to thank Tomasz Downarowicz for a careful reading of earlier versions of this note.

\end{document}